\documentclass[11pt]{article}

\usepackage{amsthm, amsmath, amssymb, amsfonts, url, booktabs, tikz, setspace, fancyhdr, bm}
\definecolor{seedblue}{RGB}{67,112,156}
\usepackage{cancel}
\usepackage{geometry}
\usepackage{hyperref, enumerate}
\usepackage[shortlabels]{enumitem}
\usepackage[babel]{microtype}
\usepackage[english]{babel}
\usepackage[capitalise]{cleveref}
\usepackage{comment}
\usepackage{bbm}
\usepackage{csquotes}
\usepackage{mathabx}
\usepackage{tikz}
\usepackage{subcaption}
\usepackage{graphicx}
\usepackage{float}
\usepackage{xcolor}
\usepackage[normalem]{ulem}
\usepackage{diagbox}
\usetikzlibrary{positioning, arrows.meta, shapes.geometric}

\counterwithin{figure}{section}

\newtheorem{theorem}{Theorem}[section]

\newtheorem{lemma}[theorem]{Lemma}

\newtheorem{conj}[theorem]{Conjecture}
\newtheorem{claim}[theorem]{Claim}

\newtheorem{fact}[theorem]{Fact}

\theoremstyle{definition}
\newtheorem{defn}[theorem]{Definition}

\newtheorem*{defn-non}{Definition}

\definecolor{rosepink}{RGB}{255,102,204}
\definecolor{dateplum}{HTML}{993366}
\definecolor{darkdateplum}{RGB}{128,0,32}
\definecolor{lightdateplum}{RGB}{219,112,147}
\definecolor{darkred}{RGB}{139,0,0}
\definecolor{lightred}{RGB}{240,130,100}

\newlist{Case}{enumerate}{3}
\setlist[Case, 1]{%
    label           =   {\bfseries Case \arabic*.},
    labelindent=1em ,labelwidth=1cm, labelsep*=1em, leftmargin =!
}
\setlist[Case, 2]{%
    label           =   {\bfseries Subcase \arabic{Casei}.\arabic*.},
    labelindent=-1em ,labelwidth=1cm, labelsep*=1em, leftmargin =!
}
\setlist[Case, 3]{%
    label           =   {\bfseries Subsubcase \arabic{Casei}.\arabic{Caseii}.\arabic*.},
    labelindent=-1em ,labelwidth=1cm, labelsep*=1em, leftmargin =!
}

\newenvironment{poc}{\begin{proof}[Proof of claim]}{\end{proof}}

\usepackage{todonotes}

\title{The asymptotic Erd\H{o}s--Frankl--F\"uredi cover-free conjecture}
\author{
Xinqi Huang\thanks{School of Mathematical Sciences, University of Science and Technology of China, Hefei, China and Extremal Combinatorics and Probability Group (ECOPRO), Institute for Basic Science (IBS), Daejeon, South Korea.
Email: \texttt{huangxq@mail.ustc.edu.cn}. Supported by the USTC Excellent PhD Students Overseas, the Institute for Basic Science (IBS-R029-C4), the National Key Research and Development Programs of China 2023YFA1010200, the NSFC under Grants No. 12171452 and No. 12231014 and Innovation Program for Quantum Science and Technology 2021ZD0302902.
}
\and
Hong Liu\thanks{Extremal Combinatorics and Probability Group (ECOPRO), Institute for Basic Science (IBS), Daejeon, South Korea. Emails: \texttt{hongliu@ibs.re.kr}. Supported by the Institute for Basic Science (IBS-R029-C4).}
}

\begin{document}
\date{}
\maketitle
\begin{abstract}
In 1985, Erd{\H{o}}s, Frankl and F{\"u}redi conjectured that
the smallest $n$ for which an $n$-element set admits an
$r$-cover-free family with more than $n$ members is
$(1+o(1))r^2$ as $r\to\infty$.
We confirm this conjecture by proving the lower bound
$r^2+r+2$ for every $r\ge1$.
\end{abstract}

\section{Introduction}

A family $\mathcal{F}$ of nonempty subsets of $[n]=\{1,\ldots,n\}$ is called \emph{$r$-cover-free} if no member is contained in the union of at most $r$ other members, that is,
for every $A\in \mathcal{F}$ and every $\mathcal{B}\subset \mathcal{F}\setminus\{A\}$ with $|\mathcal{B}|\le r$, one has 
$A\not\subseteq \bigcup_{B\in \mathcal{B}} B$.
This notion goes back to the work of Kautz and Singleton~\cite{KS64}
on superimposed codes.
Define 
$$
    f(n,r) = \max\{|\mathcal{F}|: \mathcal{F}\subset 2^{[n]} \text{ is $r$-cover-free}\}.
$$
The extremal function $f(n,r)$ was studied by Erd{\H{o}}s,
Frankl and F{\"u}redi~\cite{EFF82,EFF85}.
Cover-free families and their generalizations have since received
considerable attention; see, for example,
\cite{AB16,BG17,Furedi96,IM24,Ruszinko94,SW04,SWZ00}.

The family of all singletons shows that $f(n,r)\ge n$.
This raises a natural question: when is the trivial family of all
singletons optimal?
If an $r$-cover-free family on $[n]$ has more than $n$ members,
then adjoining a new element to the ground set and its singleton
to the family gives an $r$-cover-free family on $[n+1]$ with
more than $n+1$ members.
Thus $f(n_0,r)>n_0$ implies $f(n,r)>n$ for every $n\ge n_0$.
This leads to the threshold introduced by Erd{\H{o}}s,
Frankl and F{\"u}redi~\cite{EFF85}, which we denote by
$$T(r)=\min\{n:f(n,r)>n\}.$$

An affine-plane construction gives $T(r)\le (1+o(1))r^2$
as $r\to\infty$~\cite{EFF85}.
Indeed, for any prime power $q>r$, an affine plane of order $q$
has $q^2$ points and $q^2+q$ lines.
Each line contains $q$ points and intersects any other line
in at most one point, so it cannot be covered by $r$ other lines.
Thus its lines form an $r$-cover-free family, giving $T(r)\le q^2$.
By the prime number theorem, we may choose a prime
$q>r$ with $q=(1+o(1))r$, which yields the claimed bound.
In particular, $T(r)\le (r+1)^2$ whenever $r+1$ is a prime power.
Erd{\H{o}}s, Frankl and F{\"u}redi~\cite{EFF85} conjectured
that this upper bound is asymptotically tight.

\begin{conj}\label{conj:EFF}
    As $r\to\infty$, we have $T(r)=(1+o(1))r^2$.
\end{conj}

Earlier bounds on $T(r)$ and results for small values of $r$
can be found in~\cite{CH07,DR83,HH01}.
The best general lower bound was due to
Shangguan and Ge~\cite{SG16}, who proved that
$T(r)\ge \frac{15+\sqrt{33}}{24}r^2\approx 0.86n^2$.
We prove Conjecture~\ref{conj:EFF} by establishing
the following bound.

\begin{theorem}\label{thm:main}
    For every $r\ge 1$, we have $T(r)\ge r^2+r+2$.
\end{theorem}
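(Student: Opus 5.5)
The plan is to argue by contradiction. Fix $r$, suppose $n\le r^2+r+1$, and let $\mathcal F\subset 2^{[n]}$ be $r$-cover-free with $|\mathcal F|\ge n+1$. Choose such a counterexample with $n$ minimal.

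\textbf{Step 1 (no private elements).} Suppose some $x$ lies in exactly one member $A$. Delete $x$ from the ground set and $A$ from $\mathcal F$. The result is still $r$-cover-free: removing an element that belongs only to the deleted member changes no other member, and removing a member cannot create a covering. It lives on $n-1$ points and has at least $n$ members, contradicting the minimality of $n$. Hence every element has degree $d(x)\ge 2$, and we may also assume every element is covered at least once. Consequently each $A\in\mathcal F$ has $|A|\ge r+1$: otherwise we cover each $x\in A$ by some other member through $x$, using at most $r$ members. More generally, if every $t$-subset of $A$ lies in some other member, then splitting $A$ into $\lceil |A|/t\rceil$ blocks of size at most $t$ shows $|A|>rt$. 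So every member of size at most $rt$ has an \emph{own} $t$-subset, i.e.\ one contained in no other member. In particular every $A$ with $|A|\le 2r$ has an own pair, and own pairs of distinct members are distinct.

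\textbf{Step 2 (weighting).} Define $w(A)=\sum_{x\in A}1/d(x)$. Double counting gives
\[
\sum_{A\in\mathcal F}w(A)=\sum_{x\in[n]}\sum_{A\ni x}\frac{1}{d(x)}=n .
\]
So it suffices to show $w(A)\ge 1$ for every $A$. Then $n\ge|\mathcal F|\ge n+1$, a contradiction. The hypothesis $n\le r^2+r+1$ must enter here, and it should be tight for the projective plane of order $r$: there $|A|=r+1$, $d(x)=r+1$ and $w(A)=1$ exactly. This suggests a convexity argument around that configuration.

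\textbf{Step 3 (key lemma: $w(A)<1$ forces a cover).} Suppose $w(A)<1$. The other members meet $A$ with total intersection
\[
\sum_{B\ne A}|A\cap B|=\sum_{x\in A}(d(x)-1),
\]
which is large compared with $|A|$ exactly when the degrees are large, i.e.\ when $w(A)$ is small. I would pick $r$ other members by a greedy or fractional argument (take the $B$ with largest $|A\cap B|$ repeatedly, or a fractional covering). The aim is to cover $A$ unless
\[
|A|\cdot\min_{x\in A} d(x)\lesssim r^2+r+1 ,
\]
and to use $n\le r^2+r+1$ to bound the number of distinct members available. When this greedy bound falls short, I would split into cases by $|A|$. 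For small $|A|$ (at most $2r$) I would use the own pairs from Step 1: each own pair consumes one of at most $\binom n2$ pairs, which caps the number of small members. For large $|A|$ I would combine the own $t$-subset bound $|A|>rt$ with the degree sum $\sum_x d(x)=\sum_A|A|$, using $n\le r^2+r+1$, to show that large members have low-degree elements and hence $w(A)\ge 1$.

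I expect Step 3 to be the main obstacle. The plain greedy cover loses constant factors, yet the target $r^2+r+2$ is sharp up to $+1$. I would first try a weighted variant that discards members with small weight and rebalances their weight onto own pairs, so that the charging is exact in the projective-plane case.
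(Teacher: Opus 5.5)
\textbf{There is a genuine gap, and it is Step 3.} Steps 1 and 2 are correct but are only bookkeeping. The identity $\sum_A w(A)=n$ reduces everything to the per-member inequality $w(A)\ge 1$, and Step 3, which is supposed to prove it, contains no argument, only heuristics. All of the difficulty of the theorem sits there.

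Moreover, the mechanism you propose, that $w(A)<1$ forces a cover of $A$ by $r$ other members, is not valid. The degrees of the points of $A$ can be raised by members $B$ with $|B\cap A|=1$. If every other member meets $A$ in at most one point, covering $A$ needs $|A|\ge r+1$ members, however large the degrees are. So a contradiction from $w(A)<1$ can only come from the cover-freeness of the \emph{other} members together with the point budget $n\le r^2+r+1$.

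When members pairwise meet in at most one point this is immediate. The sets $B\setminus\{x\}$ with $B\ni x$ are then disjoint, so $d(x)\le (n-1)/r\le r+1$ and $w(A)\ge |A|/(r+1)\ge 1$. In general, however, the members through $x$ overlap outside $x$, and nothing in your sketch controls this. Your phrase ``use $n\le r^2+r+1$ to bound the number of distinct members'' is circular, since bounding $|\mathcal F|$ by $n$ is the theorem itself.

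Your fallback ideas do not fill the hole. Counting own pairs is a global count: it can cap how many members are small, but it gives no lower bound on $w(A)$ for a particular $A$, which is what your reduction requires. Also, one own pair per member only yields $|\mathcal F|\le\binom n2$. You also have no control over large members. The paper obtains $|A|\le 2r-1$ by induction on $r$: it passes to the $(r-1)$-cover-free family $\{B\setminus A: B\ne A\}$ on $n-|A|$ points.

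The paper abandons per-member point weights and counts pairs instead. It shows that the private pairs, together with suitably chosen non-private ``pseudo-private'' pairs, are distinct and number at least $(n+1)\binom{r+1}{2}$. This exceeds $\binom n2$ when $n\le r^2+r+1$. A member with fewer than $\binom{r+1}{2}$ private pairs has its deficit compensated using Stehl\'ik's partition theorem for critical down-sets, applied to the trace family $\{B\cap A\}$. A $C_4$-freeness argument then bounds the overlaps between the collections chosen for different members. Some idea of this strength is what your Step 3 is missing.
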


Cover-free families also have extensive applications in coding theory
and cryptography, including frameproof codes and digital signature
schemes; see, for example,
\cite{Blackburn03,BS98,IM19,LMS25,PWX04,STW00}.

Ge, Shangguan and Wang~\cite{GSW19} studied the minimum length
of a binary $r$-frameproof code with more codewords than its length,
and conjectured that this threshold is $(1+o(1))r^2$
as $r\to\infty$.
Zhao and Zhang~\cite{ZZ24} subsequently formulated the analogous
conjecture for $q$-ary wide-sense $r$-frameproof codes,
for every fixed $q\ge2$.
Both conjectures follow directly from Theorem~\ref{thm:main}
and the relations established in these two papers.

\paragraph{Our method.}
Inspired by the approach developed in our recent joint work
with Chao~\cite{CHL26} on the Brown--Erd\H{o}s--S\'os problem,
our proof uses a double-counting argument for pairs of
elements of the ground set.
A pair is \emph{private} if it is contained in exactly one
member of the family.
The total number of private pairs is therefore at most
$\binom n2$, but counting these pairs alone does not suffice
to obtain the desired bound.
We therefore carefully select additional collections of
non-private pairs, which we call \emph{pseudo-private}.
An auxiliary bipartite graph records the incidences
between members of the family and the pairs selected
for them.
We show that this graph is $C_4$-free and use this property
to obtain a lower bound on the number of distinct
selected pairs.
These additional pairs compensate for the deficits
in private pairs, and incorporating them into the
double-counting argument yields the desired lower bound
on $n$.

\section{Proof of \Cref{thm:main}}
We begin with some definitions and a structural lemma
that will be used in the proof.

\begin{defn}\label{def:family}
    Let $\mathcal H\subseteq2^V$ be a family whose union is $V$.
    Its \emph{covering number} $\kappa(\mathcal H)$ is the
    minimum number of members of $\mathcal H$ whose union is $V$.
    For $U\subseteq V$, write
    $\mathcal H[U]=\{Q\in\mathcal H:Q\subseteq U\}$
    for the subfamily induced by $U$.
    Let $G(\mathcal H)$ be the graph on $V$ with edge set
    $\mathcal H\cap\binom V2$.

    The family $\mathcal H$ is a \emph{down-set} if
    $Q\in\mathcal H$ and $Q'\subseteq Q$ imply
    $Q'\in\mathcal H$.
    A down-set $\mathcal H$ is \emph{critical} if
    $\kappa(\mathcal H[V\setminus\{v\}])
    =\kappa(\mathcal H)-1$ for every $v\in V$.
\end{defn}

We use the following dual form of Stehl\'ik's theorem~\cite{Stehlik06}.

\begin{lemma}[Stehl\'ik~\cite{Stehlik06}]\label{lem:stehlik}
    Let $\mathcal H\subseteq2^V$ be a critical down-set
    whose union is $V$, and write $t=\kappa(\mathcal H)$.
    If $G(\mathcal H)$ is connected, then for every $z\in V$,
    there is a partition
    $ V\setminus\{z\} =W_1\mathbin{\dot\cup}\cdots\mathbin{\dot\cup}W_{t-1}$,
    where $W_i\in\mathcal H$ and $|W_i|\ge2$
    for every $1\le i\le t-1$.
    In particular, $|V|\ge2t-1$.
\end{lemma}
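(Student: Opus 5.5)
The plan is to fix $z\in V$ and argue by extremality over partitions. Since $\mathcal H$ is critical, $\kappa(\mathcal H[V\setminus\{z\}])=t-1$, so $V\setminus\{z\}$ is covered by $t-1$ members of $\mathcal H$. Because $\mathcal H$ is a down-set, we can shrink these members to get a partition $V\setminus\{z\}=W_1\mathbin{\dot\cup}\cdots\mathbin{\dot\cup}W_{t-1}$ with every $W_i\in\mathcal H$. Among all such partitions I would choose one with the fewest singleton parts, and then show that this number must be zero. The bound $|V|\ge 2t-1$ is then immediate, since $|V|=1+\sum_i|W_i|\ge 1+2(t-1)$.

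Two facts about such partitions will be used repeatedly.

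\emph{No two singleton parts can be adjacent in $G(\mathcal H)$.} If $\{x\},\{x'\}$ are parts and $xx'$ is an edge, replace them by the single part $\{x,x'\}\in\mathcal H$. This covers $V\setminus\{z\}$ with $t-2$ members. Adding any member containing $z$ (for instance $\{z\}$, which lies in $\mathcal H$ because the union is $V$ and $\mathcal H$ is a down-set) covers $V$ with $t-1$ members, contradicting $\kappa(\mathcal H)=t$.

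\emph{Shifting moves.} Suppose $\{x\}$ is a singleton part and $xy\in G(\mathcal H)$ with $y\in W_j$, $y\neq z$. Replace $\{x\}$ and $W_j$ by $\{x,y\}$ and $W_j\setminus\{y\}$; both lie in $\mathcal H$ by the down-set property.
\begin{itemize}
\item If $|W_j|\ge3$, the number of singletons drops, contradicting minimality.
\item If $|W_j|=1$, this is the forbidden case of two adjacent singletons.
\item If $|W_j|=2$, say $W_j=\{y,w\}$, the singleton simply moves from $x$ to $w$ and the count is unchanged.
\end{itemize}
This is a Kempe-chain/alternating-path mechanism. Let $R$ be the set of vertices that can be the singleton after a sequence of such moves starting from a minimal partition. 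Every neighbour $y\neq z$ of a vertex of $R$ must lie in a part of size exactly $2$, and the partner of $y$ joins $R$.

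The main obstacle is using the connectivity of $G(\mathcal H)$ to turn this local rigidity into a contradiction, and in particular handling the vertex $z$, which lies outside the partition. The closure $X=R\cup N(R)$ is built from doubleton parts $\{y,w\}$ together with the reachable singletons, so $X\setminus\{z\}$ carries a very rigid structure. If $z\notin N(R)$, connectivity forces $X=V$. In that case the whole of $V\setminus\{z\}$ consists of one singleton plus doubletons with a perfect alternating structure, and I would derive a contradiction via a parity count or by exhibiting a smaller cover. If $z\in N(R)$, I would bring in criticality at a second vertex: for a singleton $x\in R$ adjacent to $z$, take a partition of $V\setminus\{x\}$ into $t-1$ members. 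I would then compare it with the current partition by a symmetric-difference (alternating component) argument, as in Gallai's proof that critical graphs with connected complement have at least $2k-1$ vertices. The goal is to produce either a cover of $V$ by $t-1$ members or a partition of $V\setminus\{z\}$ with fewer singletons.

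I expect this final exchange step to be the delicate part; it is exactly where Stehl\'ik's theorem goes beyond Gallai's. If a direct argument proves unwieldy, the fallback is to apply Stehl\'ik's graph theorem as a black box. To do so, I would check that the dual statement (colour classes correspond to members of $\mathcal H$, criticality means vertex-criticality, and connectivity of $G(\mathcal H)$ corresponds to connectivity of the complement) transfers verbatim to a general down-set.
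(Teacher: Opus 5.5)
\textbf{There is a genuine gap: the decisive step is missing, and the fallback does not apply.} Your preliminary steps are correct: a partition of $V\setminus\{z\}$ into $t-1$ members exists, singleton parts are pairwise non-adjacent, and a singleton can be shifted along alternating paths through doubletons. But the proposal stops exactly where the content of the lemma begins, and your case analysis at that point is inverted.

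The case $z\in N(R)$ cannot occur at all. If a reachable singleton $\{x\}$ has $xz\in E(G(\mathcal H))$, replacing it by $\{x,z\}$ covers $V$ with $t-1$ members. Hence $z\notin X=R\cup N(R)$ always, so connectivity never forces $X=V$. It only yields an edge from some $y\in N(R)\setminus R$ (a vertex in a doubleton whose partner is in $R$) to a vertex outside $X$, and none of your local moves forbids this.

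The down-closure of the star $K_{1,3}$ with centre $y$ and leaves $x,w,z$ shows this concretely. Here $\kappa(\mathcal H)=3$, $\kappa(\mathcal H[V\setminus\{z\}])=2$, and $G(\mathcal H)$ is connected. Your moves give $R=\{x,w\}$ and $N(R)=\{y\}$, with $yz$ leaving $X$. The required partition of $V\setminus\{z\}$ does not exist, and the only hypothesis that fails is criticality at $y$. So the missing step must use criticality at vertices other than $z$, in a global way.

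A parity count cannot replace that step. The configuration ``one singleton plus doubletons'' has $|V|=2t-2$, so excluding it is already an instance of the bound $|V|\ge2t-1$ you are trying to prove. As written, the direct argument is a plan whose decisive step is left open.

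The fallback does not work as stated either. Stehl\'ik's colouring theorem applies only when $\mathcal H$ is the family of independent sets of a graph. Equivalently, $\mathcal H$ must be a flag down-set, meaning $Q\in\mathcal H$ as soon as all pairs of $Q$ lie in $\mathcal H$. General down-sets need not be flag, and this includes the down-closures of traces $\mathcal H_A$ used in the paper. For instance, $\mathcal H=\{Q\subseteq\{1,2,3\}:|Q|\le2\}$ is critical with $\kappa(\mathcal H)=2$ and $G(\mathcal H)=K_3$. Yet no graph has $\mathcal H$ as its independence family, and the natural candidate, the complement of $G(\mathcal H)$, has chromatic number $1$. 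So nothing transfers verbatim.

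The paper does not prove the lemma; it quotes Stehl\'ik's result in dual form. That is his theorem on connected $\tau$-critical hypergraphs, which is stated for arbitrary hypergraphs and so dualises to arbitrary down-sets. Under the dualisation, the maximal members of $\mathcal H$ become vertices, and each $v\in V$ becomes the edge formed by the maximal members containing $v$. Then $\kappa$ becomes the transversal number, criticality of $\mathcal H$ becomes $\tau$-criticality, and connectivity of $G(\mathcal H)$ becomes connectivity of the hypergraph. Citing that version settles the lemma; otherwise your direct route has to supply the whole global argument described above.
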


Throughout this section, we argue 
by induction on $r$. 
For $r=1$, a cover-free
family is an antichain, and on at most three elements it has at most
as many members as elements. Thus $T(1)\ge4$.
Now let $r\ge2$, suppose for a contradiction that
$n=T(r)\le r^2+r+1$, and choose an $r$-cover-free family
$\mathcal F\subseteq2^{[n]}$ with $|\mathcal F|=n+1$.
The minimality of $n$ ensures that $\bigcup_{A\in\mathcal F}A=[n]$.
An element of $[n]$ is \emph{private} if it belongs to exactly one
member of $\mathcal F$.

\begin{fact}\label{fact:preprocessing}
    Minimality of $n$ and induction on $r$ imply that:
    \begin{enumerate}[(1)]
        \item\label{item:nonprivate}
        No element of $[n]$ is private.
        \item\label{item:setsize}
        Every member $A\in\mathcal F$ satisfies
        $r+1\le |A|\le 2r-1$.
    \end{enumerate}
\end{fact}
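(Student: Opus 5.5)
The plan is to prove both parts by deleting a carefully chosen piece of the configuration. Each deletion produces a family that contradicts either the minimality of $n=T(r)$ or the induction hypothesis $T(r-1)\ge (r-1)^2+(r-1)+2=r^2-r+2$. Throughout I use that $\bigcup_{A\in\mathcal F}A=[n]$ and $|\mathcal F|=n+1$.

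\emph{Part (1).} Suppose $x\in[n]$ is private, and let $A$ be the unique member of $\mathcal F$ containing it. I would consider $\mathcal F\setminus\{A\}$ as a family on the ground set $[n]\setminus\{x\}$. No other member contains $x$, so every member of $\mathcal F\setminus\{A\}$ is a nonempty subset of $[n]\setminus\{x\}$. A subfamily of an $r$-cover-free family is still $r$-cover-free. This family has $n>n-1$ members on an $(n-1)$-element set, so $f(n-1,r)>n-1$, contradicting $n=T(r)$.

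\emph{Lower bound in (2).} Suppose $|A|\le r$. By (1), every $a\in A$ lies in some member $B_a\ne A$. Then $A\subseteq\bigcup_{a\in A}B_a$ is a union of at most $r$ other members, which contradicts $r$-cover-freeness. Hence $|A|\ge r+1$.

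\emph{Upper bound in (2).} This uses the induction on $r$ and is the main step. Fix $A\in\mathcal F$ and set $\mathcal F'=\{B\setminus A: B\in\mathcal F\setminus\{A\}\}$ on the ground set $[n]\setminus A$. I would check three things, each using $r\ge2$.
\begin{enumerate}[(i)]
\item Each $B\setminus A$ is nonempty, since $B\not\subseteq A$ by $r\ge1$.
\item The map $B\mapsto B\setminus A$ is injective. If $B\setminus A=C\setminus A$ with $B\ne C$, then $B\subseteq C\cup A$ would be covered by $2\le r$ other members.
\item $\mathcal F'$ is $(r-1)$-cover-free. If $B\setminus A\subseteq\bigcup_{i\le r-1}(C_i\setminus A)$ with each $C_i\ne B$, then $B\subseteq A\cup C_1\cup\cdots\cup C_{r-1}$, a union of at most $r$ members other than $B$, which is impossible.
\end{enumerate}
Thus $\mathcal F'$ is an $(r-1)$-cover-free family with $n$ members on $n-|A|<n$ points. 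Therefore $f(n-|A|,r-1)>n-|A|$, and so $n-|A|\ge T(r-1)\ge r^2-r+2$. Combining this with $n\le r^2+r+1$ gives $|A|\le 2r-1$.

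The only real care needed is the bookkeeping in the upper bound: the injectivity and the cover-freeness of $\mathcal F'$ both rely on $r\ge2$, and one must use that $T(r-1)$ is defined via $f(\cdot,r-1)$ exceeding the ground-set size.
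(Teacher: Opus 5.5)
Your proposal is correct and follows the paper's proof essentially step for step. You delete the unique member containing a private element for (1), cover $A$ elementwise for the lower bound, and pass to the $(r-1)$-cover-free family $\{B\setminus A: B\in\mathcal F\setminus\{A\}\}$ with the induction hypothesis $T(r-1)\ge r^2-r+2$ for the upper bound; the only differences are that you derive $|A|\le 2r-1$ directly rather than by contradiction from $|A|\ge 2r$, and that you spell out the injectivity and cover-freeness checks which the paper calls easy.
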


\begin{proof}
    Suppose that some element $x\in[n]$ belongs to exactly one member $A\in\mathcal F$. 
    Then $\mathcal F'=\mathcal F\setminus\{A\}$
    is still an $r$-cover-free family over $[n]\setminus\{x\}$ satisfying
    $|\mathcal F'|=|\mathcal F|-1>n-1$.
    This contradicts the definition of $n=T(r)$. 
    This proves~\ref{item:nonprivate}.
    Consequently, every member $A\in\mathcal F$ has size at least $r+1$.
    Indeed, otherwise, for each $x\in A$, we could choose another member containing $x$, thereby covering $A$ with at most $|A|\le r$ other members, a contradiction.

    For the upper bound, 
    suppose that $|A|\ge2r$ for some $A\in\mathcal F$, and let
    $\mathcal F_A
        =\{B\setminus A:B\in\mathcal F\setminus\{A\}\}$.
    The members of $\mathcal F_A$ are nonempty and pairwise
    distinct, since otherwise some member of $\mathcal F$
    would be covered by at most two others.
    Moreover, it is easy to check that $\mathcal F_A$ is an $(r-1)$-cover-free family on
    $n-|A|$ elements with
    $|\mathcal F_A|=|\mathcal F|-1= n>n-|A|$.
    By the induction hypothesis on $r$, we have
    $n-|A|\ge(r-1)^2+(r-1)+2=r^2-r+2$.
    Since $|A|\ge2r$, this yields $n\ge r^2+r+2$, contrary to our assumption.
    Thus every member of $\mathcal F$ has size at most $2r-1$, completing the proof of~\ref{item:setsize}.
\end{proof}

For each $A\in\mathcal F$, let
$\mathcal G_A=\{B\cap A:B\in\mathcal F\setminus\{A\}\}$,
and denote its downward closure by
$\mathcal H_A=\{Q\subseteq A:Q\subseteq Q_1
\text{ for some }Q_1\in\mathcal G_A\}$.
By Fact~\ref{fact:preprocessing} \ref{item:nonprivate}, every element of $A$ belongs to
some member of $\mathcal G_A$.
Thus $\bigcup_{Q\in\mathcal G_A}Q=A$, and
$\mathcal H_A$ contains every singleton of $A$.
Since $\mathcal F$ is $r$-cover-free, no collection of
at most $r$ members of $\mathcal G_A$ covers $A$.
Moreover, taking the downward closure does not change
the covering number, since each member of $\mathcal H_A$
can be enlarged to a member of $\mathcal G_A$.
Therefore, $\kappa(\mathcal H_A)=\kappa(\mathcal G_A)\ge r+1$.

A pair $e\in\binom A2$ is called \emph{private to $A$}
if it is contained in no other member of $\mathcal F$.
Equivalently, $e$ is a nonedge of $G(\mathcal H_A)$.
Let $q_A$ denote the number of private pairs of $A$;
thus $q_A=\binom{|A|}{2}-e(G(\mathcal H_A))$.
We call $A$ \emph{good} if $q_A\ge\binom{r+1}{2}$,
and \emph{bad} otherwise.
Let $\mathcal B$ denote the family of bad members.

Our strategy is to obtain a lower bound on the number of
distinct pairs contained in members of $\mathcal F$.
Since there are at most $\binom n2$ such pairs, this will
yield a lower bound on $n$.
Private pairs belonging to different members of $\mathcal F$
are distinct, so they contribute
$\sum_{A\in\mathcal F}q_A$ pairs.
If every member of $\mathcal F$ were good, this would give
$\binom n2\ge|\mathcal F|\binom{r+1}{2}$ and hence
the desired bound on $n$.
To compensate for the deficits of the bad members,
we will also count collections of non-private pairs
whose overlaps can be controlled using the following condition.

\begin{defn}\label{def:pseudo}
    Let $\mathcal H\subseteq2^V$ be a family.
    A collection $\mathcal P\subseteq E(G(\mathcal H))$
    is \emph{pseudo-private} if
    $|\mathcal P\cap\binom Q2|\le1$
    for every $Q\in\mathcal H$.
\end{defn}

For $\mathcal H=\mathcal H_A$, this condition means that
every member $B\in\mathcal F\setminus\{A\}$ contains
at most one pair from $\mathcal P$.
In particular, if $\mathcal P_A$ and $\mathcal P_B$
are pseudo-private collections in $\mathcal H_A$
and $\mathcal H_B$, respectively, then
$|\mathcal P_A\cap\mathcal P_B|\le1$ whenever $A\ne B$.

The following lemma shows that we can choose pseudo-private
collections for the bad members so that their union,
together with the private pairs, gives the required count.

\begin{lemma}\label{lem:assignment}
    There exists an assignment of a pseudo-private
    collection $\mathcal P_A$ in $\mathcal H_A$
    to each $A\in\mathcal B$ such that
    \[
        \sum_{A\in\mathcal F}q_A
        +\left|\bigcup_{A\in\mathcal B}\mathcal P_A\right|
        \ge |\mathcal F|\binom{r+1}{2}.
    \]
\end{lemma}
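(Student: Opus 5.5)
The plan is to assign to each bad member $A$ a pseudo-private collection $\mathcal P_A$ that is large compared with its \emph{deficit} $d_A=\binom{r+1}{2}-q_A>0$. We then control the overlaps between different $\mathcal P_A$'s through the auxiliary bipartite graph $\Gamma$. Its two sides are $\mathcal B$ and $\bigcup_{A\in\mathcal B}\mathcal P_A$, and $A$ is joined to every pair in $\mathcal P_A$. By the remark after Definition~\ref{def:pseudo}, $|\mathcal P_A\cap\mathcal P_B|\le1$ for $A\ne B$, so $\Gamma$ is $C_4$-free. Since $\sum_A q_A$ counts only private pairs and $\bigcup\mathcal P_A$ consists of non-private pairs, the two counts are disjoint. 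It therefore suffices to prove
\[
\Bigl|\bigcup_{A\in\mathcal B}\mathcal P_A\Bigr|\ \ge\ \sum_{A\in\mathcal B} d_A .
\]

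\textbf{Step 1 (local construction).} Fix a bad $A$. Choose a minimal $U\subseteq A$ with $\kappa(\mathcal H_A[U])=\kappa(\mathcal H_A)\ge r+1$, and then pass to a critical induced down-set. Criticality holds by minimality after deleting vertices one at a time. If $G(\mathcal H_A[U])$ is disconnected, treat each component separately, since covering numbers add over components. Lemma~\ref{lem:stehlik} then gives a partition of $U\setminus\{z\}$ into at least $r$ sets $W_i\in\mathcal H_A$ with $|W_i|\ge2$, so $|U|\ge 2r+1-1$ on a connected piece. Since $|A|\le 2r-1$ by Fact~\ref{fact:preprocessing}, this structure is tight. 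A pair inside some $W_i$ is a non-private edge of $G(\mathcal H_A)$.

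I would build $\mathcal P_A$ greedily. For each vertex $z$ (or each choice of the Stehlík partition), pick pairs lying in distinct parts and crossing the structure so that no single $Q\in\mathcal H_A$ contains two of them. The target is
\[
|\mathcal P_A|\ \ge\ c\, d_A ,
\]
with $c$ large enough (I would aim for $c=2$). The quantitative link is that a small $q_A$ forces $e(G(\mathcal H_A))=\binom{|A|}{2}-q_A$ to be large, while each $Q\in\mathcal H_A$ spans at most $\binom{|Q|}{2}$ edges. Also $\kappa\ge r+1$ together with $|A|\le 2r-1$ forces many $Q$'s to be small (mostly of size $\le 2$), which makes it possible to choose many edges with no two in a common $Q$.

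\textbf{Step 2 (global overlap).} Write $m_e$ for the $\Gamma$-degree of a pair $e$. Then
\[
\Bigl|\bigcup\mathcal P_A\Bigr|=\sum_A|\mathcal P_A|-\sum_e(m_e-1).
\]
If $e\in\mathcal P_A\cap\mathcal P_B\cap\cdots$, then every such $A$ contains $e$. Moreover, pseudo-privacy in $\mathcal H_A$ means that every other member meets $\mathcal P_A$ in at most one pair. Using $C_4$-freeness, I would charge each extra incidence $(A,e)$ with $m_e\ge2$ to $A$. Each $A$ can share pairs with the members $B$ containing those pairs, but via $C_4$-freeness at most one pair per $B$. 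The aim is to show that at most half of $\mathcal P_A$ is lost. Combined with $c=2$, this yields the displayed inequality. If the charging is too lossy, I would fall back on the standard $C_4$-free edge bound (Kővári--Sós--Turán / Reiman) applied to $\Gamma$, using $|\mathcal B|\le n+1$ and the number of pairs $\le\binom n2$.

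\textbf{Main obstacle.} The hardest step is Step 1. We need a pseudo-private collection whose size genuinely beats the deficit $d_A$, uniformly over all bad $A$. This requires exploiting the exact Stehlík structure when $|A|$ is close to $2r-1$, and handling separately the case where $G(\mathcal H_A)$ is disconnected. I would first try to prove $|\mathcal P_A|\ge 2d_A$ by taking, for each part $W_i$ of the Stehlík partition, one edge inside $W_i$. I would then check pseudo-privacy using the fact that a $Q$ containing edges from two parts could replace two parts in the cover, contradicting $\kappa\ge r+1$.
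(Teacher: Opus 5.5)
Your reduction to $\bigl|\bigcup_{A\in\mathcal B}\mathcal P_A\bigr|\ge\sum_{A\in\mathcal B}d_A$ and the $C_4$-freeness of the incidence graph are as in the paper. The core plan of Step~2, however, cannot work: proving $|\mathcal P_A|\ge 2d_A$ and then showing that at most half of each $\mathcal P_A$ is lost to overlaps. $C_4$-freeness (``at most one shared pair per $B$'') only gives $\sum_e\binom{m_e}{2}\le\binom h2$, and this allows heavy sharing. For example, the incidence graph of a Steiner triple system on $\mathcal B$ (for suitable $h$) is $C_4$-free, has $|\mathcal P_A|=(h-1)/2$ for every $A$, and puts every pair in three of the $\mathcal P_A$. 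Then $\bigl|\bigcup_A\mathcal P_A\bigr|=\frac13\sum_A|\mathcal P_A|$, which is below $\sum_A d_A$ if $|\mathcal P_A|=2d_A$. Since $h$ may be of order $r^2$, no bound of the form $|\mathcal P_A|\ge 2d_A$ suffices.

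The missing ingredient is an additive surplus of order $r^2$ in the local bound. The paper proves $|\mathcal P_A|\ge 2d_A+\frac{r(r+1)+1}{6}$ (Lemma~\ref{lem:local}). It then combines $\sum_e\binom{m_e}{2}\le\binom h2$ with $x\le2+\frac13\binom x2$ for nonnegative integers to get $\sum_A|\mathcal P_A|\le 2\bigl|\bigcup_A\mathcal P_A\bigr|+\frac13\binom h2$. Hence $\bigl|\bigcup_A\mathcal P_A\bigr|\ge\sum_A d_A+\frac h{12}\bigl(r(r+1)+2-h\bigr)\ge\sum_A d_A$, because $h\le|\mathcal F|\le r(r+1)+2$. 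Your K\H{o}v\'ari--S\'os--Tur\'an/Reiman fallback is the right kind of tool, but it also closes only with this surplus.

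Step~1 is also far from sufficient. Taking one edge inside each Stehl\'ik part gives at most $r$ pairs. Moreover, the replacement argument certifies pseudo-privacy only for parts of size two: a $Q$ containing one edge of a part of size at least three does not cover that part. Yet the bound actually needed, $2d_A+\frac{r(r+1)+1}{6}$, is quadratic in $r$, and even $d_A$ alone can a priori be of order $r^2$.

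Note also that $G(\mathcal H)$ is never connected here. Lemma~\ref{lem:stehlik} with $t=r+1$ would give $|V|\ge2r+1>|A|$, so the structure is not ``tight''. With $|V|=r+s$ there are $c=r-s+2+M$ components, and this component bookkeeping (which forces $s>2(r+1)/3$) is what ultimately produces the surplus.

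The paper obtains a quadratic pseudo-private family in each $U_i$, which consists of $z_i$ together with the size-two parts (a perfect matching with involution $f_i$). It takes the edges $\alpha$ for which $f_i(\alpha)$ is also an edge. It then discards those meeting the set $L_i$ of vertices that could complete a triple $\{x,y,v\}\in\mathcal H_i[U_i]$ (Claim~\ref{cl:trace}), which yields $|\mathcal P_i|\ge\binom{|U_i|-1}{2}-2q_i$. Nothing in your greedy sketch produces a family of this size.
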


\begin{proof}[Deduction of \Cref{thm:main} from Lemma~\ref{lem:assignment}]
Private pairs belonging to different members of $\mathcal F$
are distinct, and every pair in
$\bigcup_{A\in\mathcal B}\mathcal P_A$ is non-private.
Thus Lemma~\ref{lem:assignment} and $|\mathcal F|\ge n+1$ give
\[
    \binom n2
    \ge \sum_{A\in\mathcal F}q_A
        +\left|\bigcup_{A\in\mathcal B}\mathcal P_A\right|
    \ge (n+1)\binom{r+1}{2}.
\]
This is impossible when $n\le r^2+r+1$, proving the theorem.
\end{proof}

It remains to prove Lemma~\ref{lem:assignment}.
We first establish a lower bound on the size of a
pseudo-private collection associated with each bad member.

\begin{lemma}\label{lem:local}
    For every $A\in\mathcal B$, there exists a pseudo-private
    collection $\mathcal P_A$ in $\mathcal H_A$ such that
    \[
        |\mathcal P_A|
        \ge 2\left(\binom{r+1}{2}-q_A\right)
            +\frac{r(r+1)+1}{6}.
    \]
\end{lemma}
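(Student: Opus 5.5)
The plan is to work entirely inside the down-set $\mathcal H=\mathcal H_A$ on $V=A$, with $a=|A|$. By Fact~\ref{fact:preprocessing}, $r+1\le a\le 2r-1$, and we know $\kappa(\mathcal H)\ge r+1$. Write $m=e(G(\mathcal H))=\binom a2-q_A$ for the number of non-private pairs of $A$. The deficit is then
\[
\binom{r+1}{2}-q_A=m-\Bigl(\binom a2-\binom{r+1}2\Bigr),
\]
so the target becomes a statement purely about $\mathcal H$:
\[
|\mathcal P|\ge 2m-2\Bigl(\binom a2-\binom{r+1}2\Bigr)+\frac{r(r+1)+1}{6}.
\]
Since $a\le 2r-1$ while $\kappa\ge r+1$, the members of $\mathcal H$ are forced to be small on average. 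In particular, many vertices must be "expensive" to cover, and this is what should force a large pseudo-private collection.

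\textbf{Step 1: reduce to the critical, connected case.} Deleting a vertex lowers $\kappa$ by at most one. I will therefore repeatedly delete vertices $v$ with $\kappa(\mathcal H[V\setminus\{v\}])=\kappa(\mathcal H)$ until I reach a set $U\subseteq A$ on which $\mathcal H[U]$ is a critical down-set with $\kappa(\mathcal H[U])=\kappa(\mathcal H)=:t\ge r+1$. A pseudo-private collection in $\mathcal H[U]$ is automatically pseudo-private in $\mathcal H$, because any $Q\in\mathcal H$ meets $U$ in a member of $\mathcal H[U]$ (we have a down-set). Next I split $U$ into the components $C_1,\dots,C_k$ of $G(\mathcal H[U])$. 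Every member of $\mathcal H[U]$ of size at least $2$ lies inside one component, so $\kappa$ is additive over components, each component is itself critical, and $\sum_j t_j=t$.

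\textbf{Step 2: the skeleton from Lemma~\ref{lem:stehlik}.} For each component $C_j$ and each choice of $z\in C_j$, Lemma~\ref{lem:stehlik} gives a partition $C_j\setminus\{z\}=W_1\mathbin{\dot\cup}\cdots\mathbin{\dot\cup}W_{t_j-1}$ into members of $\mathcal H$ of size at least $2$. Summing over components gives $|U|\ge 2t-k$. Combined with $|U|\le a\le 2r-1<2t$, this says there are many singleton-like components and that the parts $W_i$ are mostly pairs.

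I will place one pair from each $W_i$ into $\mathcal P$. I will then argue that no $Q\in\mathcal H$ contains pairs from two different parts $W_i,W_j$: if it did, then $Q$ together with the remaining parts and a set through $z$ would cover the component with $t_j-1$ sets, contradicting the value of its covering number. This step requires choosing $z$ and the pairs carefully, for instance choosing the pair in $W_i$ to contain a vertex of $W_i$ lying in no other large member.

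Varying $z$ over $U$ and taking the union of the resulting pair sets, while keeping only a subfamily in which every $Q$ still sees at most one pair, should produce roughly two "independent" pairs per unit of deficit. This is where the factor $2$ should come from. The extra $\tfrac{r(r+1)+1}{6}$ should come from the slack between $t\ge r+1$ and $|U|\le 2r-1$.

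\textbf{Step 3: quantitative count, the main obstacle.} I expect the hardest step to be converting the dependence between edges into a lower bound on $|\mathcal P|$ that is linear in $m$ with slope $2$. My first attempt is to form the conflict graph on $E(G(\mathcal H))$, in which two edges are adjacent when some $Q\in\mathcal H$ contains both. I will bound the number of conflicting pairs using the fact that each $Q$ with $|Q|=s$ creates $\binom{\binom s2}{2}$ conflicts, while the covering constraint ($\kappa\ge r+1$ on at most $2r-1$ points) limits how many large $Q$ can exist. A Caro--Wei or Turán-type bound, combined with the Stehlík skeleton from Step 2 to guarantee the additive term, should then yield an independent set of the required size. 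If the Caro--Wei bound turns out to be too weak, I will instead argue greedily along the Stehlík partitions, charging each non-private pair of $A$ either to a chosen pair or to the excess $\binom a2-\binom{r+1}2$.
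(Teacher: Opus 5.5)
Your Step 1 (reduce to a critical down-set and split into components) matches the paper's reduction. After that there is a genuine gap. Your outline never produces a pseudo-private collection of the required quadratic size, and the one concrete construction you give is far too small.

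Taking one pair from each Stehl\'ik part gives at most $\sum_j(t_j-1)=t-k\le |U|-t\le (2r-1)-(r+1)=r-2$ pairs. The target is at least $2d+\frac{r(r+1)+1}{6}$ with $d=\binom{r+1}{2}-q_A\ge 1$, which always exceeds $r-2$.

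Your pseudo-privacy argument (a member $Q$ containing pairs from two parts gives a cover with one fewer set) also works only when both parts are themselves pairs. If $|W_i|\ge 3$, a $Q$ containing one pair of $W_i$ does not cover $W_i$, so no set is saved.

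The step meant to fix the count, ``varying $z$ and keeping a subfamily in which every $Q$ sees at most one pair'', is exactly the content of the lemma, and you give no mechanism for it. A single triangle of $\mathcal H$ can contain matching edges coming from different choices of $z$. The factor $2$ and the additive term are asserted, not derived.

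Step 3 is likewise a placeholder. You bound no conflict degrees. By your own rewriting, the target says $\mathcal P$ may omit at most $2\bigl(\binom{a}{2}-\binom{r+1}{2}\bigr)-m-\frac{r(r+1)+1}{6}$ of the $m$ edges. That is an additive statement costing exactly two per missing pair, which a Caro--Wei or Tur\'an bound does not naturally deliver.

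The missing idea is a structured family of size about $\binom{|U_i|-1}{2}-2q_i$ in each component, that is, almost all pairs. The paper proceeds as follows.

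\begin{enumerate}[(1)]
\item It passes to a critical family with $\kappa=r+1$ and chooses $z_i$ of maximum degree in $G_i$.
\item It discards the Stehl\'ik parts of size at least three and works on the remainder $U_i$. There the parts form a perfect matching $\mathcal M_i$ of $U_i\setminus\{z_i\}$, and still $\kappa(\mathcal H_i[U_i])=t_i-\ell_i$.
\item Let $f_i$ swap the endpoints of each matching edge, and let $J_i$ be the set of edges $\alpha$ of $G_i[U_i]-z_i$ such that $f_i(\alpha)$ is also an edge. A missing pair removes at most itself and its mirror image from $J_i$. This is where the factor $2$ actually comes from.
\item An exchange argument handles a member containing some $\{x,y\}\in J_i$ and a further vertex $v$. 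It shows that $z_if_i(v)$ must be a non-edge and that $f_i(v)$ has at most two neighbours in $f_i$ of the neighbourhood of $z_i$. Otherwise $\{x,y,v\}$, $\{f_i(x),f_i(y)\}$ and $\{z_i,f_i(v)\}$ (or $\{f_i(v),f_i(u)\}$ and $\{z_i,u\}$ for a suitable neighbour $u$ of $z_i$), together with the remaining matching edges, cover $U_i$ with fewer than $t_i-\ell_i$ sets.
\item Deleting from $J_i$ all pairs that meet the set $L_i$ of such vertices gives a pseudo-private family. The maximum-degree choice of $z_i$ bounds how many pairs are lost.
\item Only after summing over components, using $q=\sum_i q_i+\sum_{i<j}w_iw_j$, $s>2(r+1)/3$ and $3M\le X$, does the term $\frac{r(r+1)+1}{6}$ come out of an explicit optimization.
\end{enumerate}

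None of these ingredients appears in your plan.
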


\begin{proof}[Proof of Lemma~\ref{lem:assignment}
assuming Lemma~\ref{lem:local}]
Write $h=|\mathcal B|$.
For each $A\in\mathcal B$, let
$d_A=\binom{r+1}{2}-q_A$,
and choose a pseudo-private collection $\mathcal P_A$
as in Lemma~\ref{lem:local}.
Set $\mathcal D=\bigcup_{A\in\mathcal B}\mathcal P_A$.
For each $e\in\mathcal D$, let
$\mu(e)=|\{A\in\mathcal B:e\in\mathcal P_A\}|$.

We construct an auxiliary bipartite graph $F$ with vertex
classes $\mathcal B$ and $\mathcal D$, where $B\in\mathcal B$
is adjacent to $e\in\mathcal D$ if and only if
$e\in\mathcal P_B$.
We claim that $F$ is $C_4$-free.
Indeed, a $4$-cycle would give distinct
$B_1,B_2\in\mathcal B$ and distinct $e_1,e_2\in\mathcal D$
such that $e_1,e_2\in\mathcal P_{B_1}\cap\mathcal P_{B_2}$,
contradicting $|\mathcal P_{B_1}\cap\mathcal P_{B_2}|\le1$.
By double counting the pairs $(\{B_1,B_2\},e)$,
where $B_1\ne B_2$ are both adjacent to $e$ in $F$,
we obtain
\[
    \sum_{e\in\mathcal D}\binom{\mu(e)}{2}
    \le \binom h2.
\]
Using the inequality
$x\le 2+\frac13\binom x2$
for every nonnegative integer $x$
and Lemma~\ref{lem:local}, we obtain
\begin{align*}
    \sum_{A\in\mathcal B}
        \left(2d_A+\frac{r(r+1)+1}6\right)
    \le \sum_{A\in\mathcal B}|\mathcal P_A|
     = \sum_{e\in\mathcal D}\mu(e)
    \le 2|\mathcal D|
        +\frac13\sum_{e\in\mathcal D}\binom{\mu(e)}2
    \le 2|\mathcal D|+\frac13\binom h2.
\end{align*}
Consequently,
\[
    |\mathcal D|
    \ge \sum_{A\in\mathcal B}d_A
        +\frac{h}{12}\bigl(r(r+1)+2-h\bigr)
    \ge \sum_{A\in\mathcal B}d_A,
\]
where the last inequality follows from
$h\le|\mathcal F|=n+1\le r(r+1)+2$.

Since every good member has at least
$\binom{r+1}{2}$ private pairs, we conclude that
\begin{align*}
    \sum_{A\in\mathcal F}q_A+|\mathcal D|
    \ge 
    \sum_{A\in\mathcal F}q_A
        +\sum_{A\in\mathcal B}d_A
    =
    \sum_{A\in\mathcal F\setminus\mathcal B}q_A
        +h\binom{r+1}{2}
    \ge 
    |\mathcal F|\binom{r+1}{2},
\end{align*}
as required.
\end{proof}


So it remains to prove Lemma~\ref{lem:local}.

\begin{proof}[Proof of Lemma~\ref{lem:local}]
    Fix a bad member $A\in\mathcal F$.
    Choose an inclusion-minimal subset $V\subseteq A$ such that $\kappa(\mathcal H_A[V])\ge r+1$, and put $\mathcal H=\mathcal H_A[V]$.
    Since $\mathcal H$ contains every singleton of $V$, deleting one vertex decreases its covering number by at most one.
    The minimality of $V$ therefore implies that $\kappa(\mathcal H)=r+1$ and $\kappa(\mathcal H[V\setminus\{v\}])=r$ for every $v\in V$. 
    Thus $\mathcal H$ is critical.

    Let $q=\binom{|V|}{2}-e(G(\mathcal H))$.
    Then $q\le q_A<\binom{r+1}{2}$.
    Moreover, every pseudo-private collection in $\mathcal H$ is also pseudo-private in $\mathcal H_A$, since $Q\cap V\in\mathcal H$ for every $Q\in\mathcal H_A$.
    It therefore suffices to construct a pseudo-private collection $\mathcal P$ in $\mathcal H$ such that
    \begin{equation}\label{eq:local-target}
        |\mathcal P|
        \ge 
        2\left(\binom{r+1}{2}-q\right) + \frac{r(r+1)+1}6.
    \end{equation}

    Write $|V|=r+s$, where $1\le s\le r-1$, and let $V_1,\ldots,V_c$ be the vertex sets of the connected components of $G(\mathcal H)$.
    For each $i\in[c]$, put $\mathcal H_i=\mathcal H[V_i]$, $t_i=\kappa(\mathcal H_i)$, and $w_i=|V_i|$.
    Every nonempty member of $\mathcal H$ lies in one component, since all of its pairs are edges of $G(\mathcal H)$.
    Consequently, $\sum_{i=1}^c t_i=r+1$.
    The criticality of $\mathcal H$ then implies that every $\mathcal H_i$ is critical.

    We now make the following construction in each component.
    For each $i\in[c]$, let $G_i=G(\mathcal H_i)$,
    $M_i=w_i-2t_i+1$, and $q_i=\binom{w_i}{2}-e(G_i)$.
    Choose a vertex $z_i$ of maximum degree in $G_i$.
    By Lemma~\ref{lem:stehlik}, there is a partition
    \[
        V_i\setminus\{z_i\}
        =
        W_{i,1}\mathbin{\dot\cup}\cdots
        \mathbin{\dot\cup}W_{i,t_i-1},
        \qquad
        W_{i,j}\in\mathcal H_i,\quad |W_{i,j}|\ge2
        \text{ for every }j\in[t_i-1].
    \]
    In particular, $M_i=\sum_{j=1}^{t_i-1}(|W_{i,j}|-2)\ge0$.

    Let $\ell_i$ be the number of parts $W_{i,j}$ of size at least three, and let $W_i$ be their union.
    Set $a_i=|W_i|$ and $U_i=V_i\setminus W_i$.
    Since $\ell_i\le M_i$, we have
    \[
        a_i=M_i+2\ell_i\le3M_i,
        \qquad
        |U_i|
        =
        w_i-M_i-2\ell_i=2(t_i-\ell_i)-1.
    \]
    Also, $a_i\le w_i-1$, since $z_i\notin W_i$.
    The parts of size two form a perfect matching $\mathcal M_i$ in $G_i[U_i\setminus\{z_i\}]$.

    The matching edges together with $\{z_i\}$ cover $U_i$, so $\kappa(\mathcal H_i[U_i])\le t_i-\ell_i$.
    In fact, equality holds.
    Otherwise, a cover of $U_i$ by fewer than $t_i-\ell_i$ members of $\mathcal H_i[U_i]$, together with the $\ell_i$ deleted parts, would cover $V_i$ with fewer than $t_i$ members of $\mathcal H_i$, a contradiction.

We shall construct a pseudo-private collection in each
restricted family $\mathcal H_i[U_i]$.
Such a collection is also pseudo-private in $\mathcal H_i$,
since $Q\cap U_i\in\mathcal H_i[U_i]$
for every $Q\in\mathcal H_i$.


    For each $i\in[c]$, write $G_i'=G_i[U_i]$.
Let $f_i:U_i\setminus\{z_i\}\to U_i\setminus\{z_i\}$
exchange the endpoints of each edge of $\mathcal M_i$,
and put $N_i=N_{G_i'}(z_i)$.
For a pair $\alpha=\{x,y\}\in\binom{U_i\setminus\{z_i\}}2$,
write $f_i(\alpha)=\{f_i(x),f_i(y)\}$, and define
\[
    J_i=\{\alpha\in E(G_i'-z_i):
                f_i(\alpha)\in E(G_i'-z_i)\}.
\]
In particular, every edge of $\mathcal M_i$ belongs to $J_i$,
since it is fixed by $f_i$ (see \Cref{fig:component-decomposition} (a)).
The following claim will allow us to extract
a pseudo-private subcollection of $J_i$.


\begin{figure}[htbp]
    \noindent
    \begin{subfigure}[t]{0.60\linewidth}
        \centering
        \vspace{0pt}
        \resizebox{\linewidth}{!}{%
        \begin{tikzpicture}[
            font=\small,
            vertex/.style={
                circle,fill=black,
                inner sep=0pt,minimum size=3.2pt
            },
            matching/.style={black,line width=1pt},
            paired/.style={blue!70!black,line width=1pt},
            block/.style={
                draw=black!35,dashed,
                fill=white,line width=.5pt
            }
        ]
            \draw[rounded corners=7pt,black!60]
                (-.25,-.2) rectangle (12.75,5.95);
            \node[fill=white,inner xsep=6pt]
                at (6.25,5.95) {$V_i$};

            \draw[
                rounded corners=5pt,
                draw=black!30,fill=black!3
            ] (0,0) rectangle (4.9,5.55);

            \draw[
                rounded corners=5pt,
                draw=black!30,fill=blue!2
            ] (5.25,0) rectangle (12.5,5.55);

            \node at (2.45,5.15) {$W_i$};
            \node at (8.875,5.15) {$U_i$};

            \begin{scope}[shift={(1.35,3.95)}]
                \draw[block] (0,0) circle (.86);
                \foreach \j/\angle in {1/90,2/210,3/330}
                    \node[vertex] (a\j) at (\angle:.57) {};
                \foreach \u/\v in {1/2,1/3,2/3}
                    \draw[black!65] (a\u)--(a\v);
            \end{scope}

            \begin{scope}[shift={(3.5,3.95)}]
                \draw[block] (0,0) circle (.86);
                \foreach \j/\angle in
                    {1/45,2/135,3/225,4/315}
                    \node[vertex] (b\j) at (\angle:.62) {};
                \foreach \u/\v in
                    {1/2,1/3,1/4,2/3,2/4,3/4}
                    \draw[black!65] (b\u)--(b\v);
            \end{scope}

            \begin{scope}[shift={(2.45,1.9)}]
                \draw[block] (0,0) circle (1.02);
                \foreach \j/\angle in
                    {1/90,2/162,3/234,4/306,5/378}
                    \node[vertex] (c\j) at (\angle:.76) {};
                \foreach \u/\v in
                    {1/2,1/3,1/4,1/5,2/3,
                     2/4,2/5,3/4,3/5,4/5}
                    \draw[black!65] (c\u)--(c\v);
            \end{scope}

            \node at (2.45,.4)
                {Blocks $W_{i,j}$ with $|W_{i,j}|\ge3$};

            \node[vertex,label=above:{$z_i$}]
                (z) at (9,4.45) {};

            \node[vertex,label=left:{$x$}]
                (x) at (7.55,3.4) {};
            \node[vertex,label=right:{$f_i(x)$}]
                (fx) at (10.55,3.4) {};

            \node[vertex,label=left:{$y_1$}]
                (y1) at (7.55,2.25) {};
            \node[vertex,label=right:{$f_i(y_1)$}]
                (fy1) at (10.55,2.25) {};

            \node[vertex,label=left:{$y_2$}]
                (y2) at (7.55,1.0) {};
            \node[vertex,label=right:{$f_i(y_2)$}]
                (fy2) at (10.55,1.0) {};

            \draw[matching] (x)--(fx);
            \draw[matching] (y1)--(fy1);
            \draw[matching] (y2)--(fy2);

            \draw[paired]
                (y1)--node[
                    midway,left=7pt,text=blue!70!black
                ] {$y_1y_2\in J_i$} (y2);
            \draw[paired] (fy1)--(fy2);

            \node at (9.05,.4)
                {Matching edges in $\mathcal M_i$};
        \end{tikzpicture}%
        }
        \caption{The decomposition of $V_i$.}
        \label{fig:component-decomposition-a}
    \end{subfigure}\hfill
    \begin{subfigure}[t]{0.4\linewidth}
    \centering
    \vspace{0pt}
    \resizebox{0.96\linewidth}{!}{%
    \begin{tikzpicture}[
        font=\small,
        vertex/.style={
            circle,fill=black,
            inner sep=0pt,minimum size=3.5pt
        },
        old matching/.style={
            draw=black!45,densely dashed,line width=.8pt
        },
        cover edge/.style={
            line width=1.35pt,line cap=round
        }
    ]
        \draw[rounded corners=7pt,black!60]
            (0,0) rectangle (9.35,6.925);
        \node[fill=white,inner xsep=6pt]
            at (4.675,6.925) {$U_i$};

        \coordinate (x)  at (3.3,5.6);
        \coordinate (y)  at (3.3,3.1);
        \coordinate (fx) at (1.05,5.6);
        \coordinate (fy) at (1.05,3.1);
        \coordinate (v)  at (5.1,4.35);
        \coordinate (fv) at (7.65,4.35);
        \coordinate (u)  at (5.1,1.35);
        \coordinate (fu) at (7.65,1.35);
        \coordinate (z)  at (3.3,1.35);

        \draw[
            draw=black!45,
            dashed,
            fill=black!3,
            line width=.7pt
        ] (u) circle (.6);
        \node[anchor=south] at (5.1,2.05) {$N_i$};

        \draw[old matching] (x)--(fx);
        \draw[old matching] (y)--(fy);
        \draw[old matching] (v)--(fv);
        \draw[old matching] (u)--(fu);

        \path[
            draw=red!75!black,
            fill=red!9,
            line width=1.35pt,
            line join=round
        ] (x)--(y)--(v)--cycle;

        \draw[cover edge,green!45!black]
            (fx)--(fy);

        \draw[cover edge,blue!75!black]
            (fv)--(fu);

        \draw[cover edge,violet!85!black]
            (z)--(u);

        \node[vertex,label=above:{$x$}] at (x) {};
        \node[vertex,label=below:{$y$}] at (y) {};
        \node[vertex,label=above:{$f_i(x)$}] at (fx) {};
        \node[vertex,label=below:{$f_i(y)$}] at (fy) {};
        \node[vertex,label=above:{$v$}] at (v) {};
        \node[vertex,label=right:{$f_i(v)$}] at (fv) {};
        \node[vertex,label=below:{$u$}] at (u) {};
        \node[vertex,label=right:{$f_i(u)$}] at (fu) {};
        \node[vertex,label=left:{$z_i$}] at (z) {};

        \node at (4.675,.3)
            {$N_i=N_{G_i'}(z_i)$};
    \end{tikzpicture}%
    }
    \caption{A smaller cover when $y\ne f_i(x)$.}
    \label{fig:trace-cover}
\end{subfigure}

    \caption{The local construction.
        Only selected edges are shown.}
    \label{fig:component-decomposition}
\end{figure}


\begin{claim}\label{cl:trace}
    For every $i\in[c]$, suppose that $\{x,y\}\in J_i$,
    $v\in U_i\setminus\{z_i,x,y,f_i(x),f_i(y)\}$, and
    $\{x,y,v\}\in\mathcal H_i[U_i]$.
    Then
    \[
        z_i f_i(v)\notin E(G_i'),
        \qquad
        |N_{G_i'}(f_i(v))\cap f_i(N_i)|\le2.
    \]
\end{claim}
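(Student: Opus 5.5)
The plan is to argue by contradiction, exhibiting in each case a cover of $U_i$ by fewer than $t_i-\ell_i$ members of $\mathcal H_i[U_i]$. This is impossible because we showed $\kappa(\mathcal H_i[U_i])=t_i-\ell_i$. Write $m=t_i-\ell_i$. The reference cover of size $m$ is $\mathcal M_i\cup\{\{z_i\}\}$. Every modification below deletes some matching edges together with $\{z_i\}$ and inserts fewer new members of $\mathcal H_i[U_i]$ that cover the same vertices. All pairs used must be edges of $G_i'$, hence members of $\mathcal H_i[U_i]$, and $\{x,y,v\}\in\mathcal H_i[U_i]$ by hypothesis. We also use that $\alpha\in J_i$ gives $\{x,y\},\{f_i(x),f_i(y)\}\in E(G_i'-z_i)$.

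\textbf{First assertion.} Suppose $z_if_i(v)\in E(G_i')$.
\begin{itemize}
\item[(i)] If $y\ne f_i(x)$, then $x,y,v$ lie in three distinct matching edges; they are distinct because $v\notin\{x,y,f_i(x),f_i(y)\}$. Replace these three edges and $\{z_i\}$ (four members) by $\{x,y,v\}$, $\{f_i(x),f_i(y)\}$ and $\{z_i,f_i(v)\}$ (three members). These cover the same six vertices plus $z_i$.
\item[(ii)] If $y=f_i(x)$, replace the matching edges $\{x,y\}$, $\{v,f_i(v)\}$ and $\{z_i\}$ by $\{x,y,v\}$ and $\{z_i,f_i(v)\}$.
\end{itemize}
Either way the cover of $U_i$ shrinks, which is a contradiction.

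\textbf{Second assertion.} Suppose $f_i(v)$ has three neighbours $f_i(u_1),f_i(u_2),f_i(u_3)$ in $G_i'$ with $u_j\in N_i$. Then $u_j\ne v$, since $f_i(v)$ is not its own neighbour, and $u_j\ne z_i$. The generic case is when some $u=u_j$ lies outside the matching edges of $x$ and $y$. Then, when $y\ne f_i(x)$, replace the four matching edges at $x,y,v,u$ together with $\{z_i\}$ (five members) by
\[\{x,y,v\},\quad \{f_i(x),f_i(y)\},\quad \{f_i(v),f_i(u)\},\quad \{z_i,u\}\]
(four members), as in \Cref{fig:trace-cover}. When $y=f_i(x)$, the analogous swap saves one member as well.

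\textbf{Expected obstacle.} The main difficulty is the exceptional situation where the $u_j$ lie in the matching edges of $x$ and $y$. That set has four vertices, so three $u_j$ need not escape it by pigeonhole. I would first try to show that two of these four positions are useless, presumably $u\in\{x,y\}$ or $u\in\{f_i(x),f_i(y)\}$, by finding a direct shrinking rearrangement in those cases. For instance, if $u=x$ one could try covering with $\{x,y,v\}$, $\{f_i(x),f_i(v)\}$, $\{z_i,\cdot\}$ and $\{f_i(y),\cdot\}$, or use the first assertion's argument with $u$ playing a role. With two positions ruled out, the at most two remaining exceptional $u_j$ account for the bound $2$. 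If the naive swaps only break even, which they do in some subcases I checked, I would exploit the freedom to use $\{f_i(x),f_i(y)\}$ versus $\{x,f_i(v)\}$ differently. I would also invoke the maximality of $\deg z_i$ only if needed, though the claim's statement suggests a purely local exchange argument suffices.
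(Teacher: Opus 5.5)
\textbf{Genuine gap.} Your first assertion and the generic case of the second assertion use exactly the paper's exchanges. One small omission: for the four matching edges at $x,y,v,u$ to be distinct you also need $u\neq f_i(v)$. This follows from the first assertion, because $u\in N_i$ while $z_if_i(v)\notin E(G_i')$.

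The real gap is the case you flag as the obstacle: $y\neq f_i(x)$ and all the relevant $u_j$ lie in $\{x,y,f_i(x),f_i(y)\}$. This is precisely the case that produces the constant $2$, and your proposal leaves it open. Moreover, the route you suggest, ruling out two of the four positions individually, cannot work. No single position can be excluded by a local exchange using only the adjacencies guaranteed by the hypotheses. For instance, if only $u=x$ occurs, the available members are $\{x,y,v\}$, $\{x,y\}$, $\{f_i(x),f_i(y)\}$, $\{z_i,x\}$, $\{f_i(x),f_i(v)\}$ and the matching edges. Covering the seven vertices $x,y,v,f_i(x),f_i(y),f_i(v),z_i$ with three of them would require $\{x,y,v\}$ plus two disjoint pairs covering $f_i(x),f_i(y),f_i(v),z_i$. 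But $z_i$ has no guaranteed neighbour among $f_i(x),f_i(y),f_i(v)$, so the swaps break even, as you observed.

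The missing idea is that the four positions are excluded in pairs rather than individually. Suppose both $u=f_i(x)$ and $u'=y$ occur. Then $z_if_i(x)$ and $f_i(v)f_i(y)$ are edges of $G_i'$. The three sets $\{x,y,v\}$, $\{z_i,f_i(x)\}$, $\{f_i(v),f_i(y)\}$ replace the three matching edges at $x,y,v$ together with $\{z_i\}$, contradicting $\kappa(\mathcal H_i[U_i])=t_i-\ell_i$. This exchange does not use $\{f_i(x),f_i(y)\}$ at all. It combines the $z_i$-adjacency supplied by one $u$ with the $f_i(v)$-adjacency supplied by the other.

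Symmetrically, $x$ and $f_i(y)$ cannot both occur; use $\{x,y,v\}$, $\{z_i,f_i(y)\}$, $\{f_i(v),f_i(x)\}$. Hence each of the pairs $\{y,f_i(x)\}$ and $\{x,f_i(y)\}$ contains at most one $u_j$. So at most two $u_j$ lie in the exceptional set, and a third would have to be generic, which your main exchange already excludes. When $y=f_i(x)$ the exceptional set has only two elements, so plain pigeonhole suffices.

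This is the paper's argument. There it is phrased as the inclusion $N_{G_i'}(f_i(v))\cap f_i(N_i)\subseteq\{x,y,f_i(x),f_i(y)\}$, followed by the same pairwise exclusion.
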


\begin{poc}
Fix $i\in[c]$.
Recall that $\kappa(\mathcal H_i[U_i])=t_i-\ell_i$
and that $\mathcal M_i$ is a perfect matching of
$U_i\setminus\{z_i\}$.
Suppose first that $z_i f_i(v)\in E(G_i')$.
If $y\ne f_i(x)$, then the sets
\[
    \{x,y,v\},\qquad
    \{f_i(x),f_i(y)\},\qquad
    \{z_i,f_i(v)\},
\]
together with the matching edges disjoint from these sets,
cover $U_i$ with $t_i-\ell_i-1$ members of
$\mathcal H_i[U_i]$, a contradiction.
If $y=f_i(x)$, omitting $\{f_i(x),f_i(y)\}$
gives a cover of the same size.
Thus $z_i f_i(v)\notin E(G_i')$.

We next show that
$N_{G_i'}(f_i(v))\cap f_i(N_i)
    \subseteq \{x,y,f_i(x),f_i(y)\}$.
Suppose otherwise that $p$ belongs to the left-hand side
but not the right-hand side, and write $p=f_i(u)$
with $u\in N_i$.
Since $p$ is a neighbor of $f_i(v)$, we have $u\ne v$.
Moreover, the first conclusion gives $f_i(v)\notin N_i$,
so $u\ne f_i(v)$.
It follows that the matching edge $\{u,f_i(u)\}$
is disjoint from the matching edges containing $x,y,v$.
Now the sets
\[
    \{x,y,v\},\qquad
    \{f_i(x),f_i(y)\},\qquad
    \{f_i(v),f_i(u)\},\qquad
    \{z_i,u\},
\]
together with the remaining matching edges, cover $U_i$
with $t_i-\ell_i-1$ members of $\mathcal H_i[U_i]$ (see \Cref{fig:component-decomposition} (b)).
When $y=f_i(x)$, omit $\{f_i(x),f_i(y)\}$.
This contradiction proves the inclusion.

If $y=f_i(x)$, the inclusion immediately gives
$|N_{G_i'}(f_i(v))\cap f_i(N_i)|\le2$.
Otherwise, the four vertices $x,y,f_i(x),f_i(y)$
are distinct.
The intersection cannot contain both $x$ and $f_i(y)$:
these two memberships imply
$z_i f_i(x),f_i(v)f_i(y)\in E(G_i')$, and hence
\[
    \{x,y,v\},\qquad
    \{z_i,f_i(x)\},\qquad
    \{f_i(v),f_i(y)\},
\]
together with the remaining matching edges, cover $U_i$
with $t_i-\ell_i-1$ members.
Similarly, the intersection cannot contain both
$y$ and $f_i(x)$.
It therefore contains at most one vertex from each of
$\{x,f_i(y)\}$ and $\{y,f_i(x)\}$, proving the desired bound.
\end{poc}

\begin{claim}\label{cl:local-component}
    For every $i\in[c]$, there exists a pseudo-private
    collection $\mathcal P_i$ in $\mathcal H_i[U_i]$
    such that
    \[
        |\mathcal P_i|
        \ge \binom{|U_i|-1}{2}-2q_i.
    \]
\end{claim}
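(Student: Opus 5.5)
The plan is to take $\mathcal P_i$ to be a large subcollection of $J_i$. We count $J_i$ first and then delete the pairs that violate pseudo-privacy, paying for each deletion with nonedges.

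\emph{Counting $J_i$.} Write $|U_i|-1=2m$, where $m=t_i-\ell_i-1$ is the number of edges of $\mathcal M_i$. Since $f_i$ is a fixed-point-free involution on $U_i\setminus\{z_i\}$, it induces an involution $\alpha\mapsto f_i(\alpha)$ on $\binom{U_i\setminus\{z_i\}}2$. A pair $\alpha$ fails to lie in $J_i$ only if $\alpha$ or $f_i(\alpha)$ is a nonedge of $G_i'-z_i$. Every nonedge of $G_i'$ is a nonedge of $G_i$, so it is counted in $q_i$. Each such nonedge $\beta$ excludes at most the two pairs $\beta$ and $f_i(\beta)$. Hence
\[
|J_i|\ge\binom{2m}{2}-2q_i',
\]
where $q_i'\le q_i$ is the number of nonedges of $G_i'-z_i$. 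The slack $q_i-q_i'$ is at least the number of nonedges at $z_i$ inside $U_i$, together with the nonedges between $U_i$ and $W_i$. I will keep this slack in reserve for the deletions below.

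\emph{Locating the violations.} Suppose some $Q\in\mathcal H_i[U_i]$ contains two pairs of $J_i$. Since $Q$ is a clique in $G_i'$, it contains a pair $\{x,y\}\in J_i$ and a third vertex $v$. The key case is $v\notin\{z_i,f_i(x),f_i(y)\}$. Because $\mathcal H_i$ is a down-set, $\{x,y,v\}\in\mathcal H_i[U_i]$, so Claim~\ref{cl:trace} applies. It gives that $u=f_i(v)$ is \emph{deficient}: $z_iu\notin E(G_i')$ and $|N_{G_i'}(u)\cap f_i(N_i)|\le2$.

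The remaining configurations are $v=z_i$ and $v\in\{f_i(x),f_i(y)\}$. I would treat them by the same exchange argument as in Claim~\ref{cl:trace}: replace the matching edges through $x,y,v$ by $\{x,y,v\}$ together with one extra member, and obtain a cover of $U_i$ by $t_i-\ell_i-1$ members, contradicting $\kappa(\mathcal H_i[U_i])=t_i-\ell_i$. For instance, when $v=z_i$, the set $\{x,y,z_i\}$, the pair $\{f_i(x),f_i(y)\}\in E$ and the other matching edges cover $U_i$ with $m$ members. This rules out $z_i$ lying in a member that contains a $J_i$-pair from $G_i'-z_i$. So, after checking these degenerate cases, every violation is witnessed by a deficient vertex $u$ with $f_i(u)\in Q$.

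\emph{Deletion and charging.} Let $D$ be the set of deficient vertices. I set $\mathcal P_i=J_i$ minus all pairs meeting $f_i(D)$ (and, if necessary, $D$). Then no member $Q$ can contain two surviving pairs: a violation would require the third vertex $v=f_i(u)$ with $u\in D$, and I would need to arrange that the witness vertex $v$ can be chosen among the vertices whose pairs were removed. The number of removed pairs is at most about $2|D|\cdot 2m$.

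Each $u\in D$ is nonadjacent to $z_i$, and it is nonadjacent to all but two vertices of $f_i(N_i)$. Here $N_i$ is large because $z_i$ has maximum degree in $G_i$, so $|N_i|\ge\deg_{G_i'}(u)$ for every $u$. Thus each $u\in D$ carries roughly $|N_i|$ plus $\bigl(2m-|N_i|\bigr)$ nonedges, that is, about $2m$ nonedges. Counted with the factor $2$ coming from the $f_i$-symmetry, these pay for the removed pairs. Combining this with the reserved slack $q_i-q_i'$ should give
\[
|\mathcal P_i|\ge\binom{2m}{2}-2q_i.
\]

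\emph{Main obstacle.} The delicate step is this final charging. The nonedges used to pay for deletions must not already have been spent in the bound $|J_i|\ge\binom{2m}2-2q_i'$. The two deficiency conditions together with the max-degree choice of $z_i$ must supply enough fresh nonedges per deleted vertex. If the naive deletion is too wasteful, my first alternative is a finer rule: for each violating $Q$, delete only the $J_i$-pairs inside $Q$ that contain $f_i(u)$, and charge them to the at least $|N_i|-2$ nonedges between $u$ and $f_i(N_i)$.
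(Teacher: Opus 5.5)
Your choice of $\mathcal P_i$ is the right one: remove from $J_i$ every pair meeting $f_i(D)$ (removing pairs at $D$ is unnecessary). The gap is in the counting, which is the real content of the claim, and the charging you propose does not go through.

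\textbf{Why the proposed charging fails.}
\begin{itemize}
\item In your bound $|J_i|\ge\binom{2m}{2}-2q_i'$, every nonedge inside $U_i\setminus\{z_i\}$ has already been charged twice. So the nonedges between $u\in D$ and $f_i(N_i)$ cannot pay a second time.
\item Your estimate of ``about $2m$ nonedges per $u$'' adds two lower bounds, one from deficiency and one from the maximum-degree choice, that count overlapping sets of nonedges.
\item A deletion count of order $4m|D|$ cannot be paid by the only resources left unspent. These are the $|B_i|$ nonedges at $z_i$, where $B_i=U_i\setminus(N_i\cup\{z_i\})\supseteq D$, and the nonedges to $W_i$.
\end{itemize}

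\textbf{What is missing.}
\begin{itemize}
\item[(a)] Count $J_i$ exactly: $|J_i|=\binom{2m}{2}-2q_i'+g_i$, where $g_i$ is the number of pairs $\alpha$ with both $\alpha$ and $f_i(\alpha)$ missing. This double-charged surplus is precisely what deficiency produces.
\item[(b)] For $v\in f_i(D)$, prove $d_{J_i}(v)\le g_i(v)+2+b_i(v)$. Here $g_i(v)$ counts $w\in N_i\setminus\{v\}$ with both $vw$ and $f_i(v)f_i(w)$ missing, and $b_i(v)$ counts nonneighbors of $v$ in $W_i$. Since $z_i$ has maximum degree in $G_i$, you get $d_{G_i'}(v,B_i)\le|\{w\in N_i\setminus\{v\}:vw\notin E(G_i')\}|+b_i(v)$. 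Split these $w$ by whether $f_i(v)f_i(w)$ is an edge. Those $w$ where it is an edge, together with the $J_i$-pairs from $v$ into $N_i$, map injectively into $N_{G_i'}(f_i(v))\cap f_i(N_i)$, which has at most two vertices.
\item[(c)] Show $\sum_{v\in f_i(D)}g_i(v)\le g_i$ by an orbit argument. A pair counted twice has both ends in $f_i(D)\cap N_i$, so its image lies inside $B_i$ and is not counted.
\end{itemize}
Then pay the remaining terms with $2(q_i-q_i')\ge 2|B_i|+\sum_v b_i(v)\ge 2|D|+\sum_v b_i(v)$. This gives $|\mathcal P_i|\ge\binom{2m}{2}-2q_i$.

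\textbf{A mis-argued degenerate case.} In your pseudo-privacy check, suppose $Q\supseteq\{x,y,f_i(x)\}$ with $y\ne f_i(x)$. Using $\{x,y,f_i(x)\}$ plus a member covering $f_i(y)$ replaces two matching edges by two members, so no smaller cover arises. Instead, apply Claim~\ref{cl:trace} to the matching edge $\{x,f_i(x)\}\in J_i$ and the vertex $y$; this gives $y\in f_i(D)$.

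Also, if you always take the third vertex from the second pair, the case $v=z_i$ never arises. The witness is then automatically an endpoint of a removed pair, which settles your concern about arranging the witness.
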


\begin{poc}
Fix $i\in[c]$, and define
\[
    L_i=\left\{
        v\in U_i\setminus\{z_i\}:
        z_i f_i(v)\notin E(G_i'),\
        |N_{G_i'}(f_i(v))\cap f_i(N_i)|\le2
    \right\}.
\]
We take
$\mathcal P_i=\{\alpha\in J_i:\alpha\cap L_i=\varnothing\}$.

We first verify that $\mathcal P_i$ is pseudo-private.
No member of $\mathcal H_i[U_i]$ can contain two distinct
edges of $\mathcal M_i$: replacing those two edges by
that member in the cover consisting of $\mathcal M_i$
and $\{z_i\}$ would give a cover with fewer than
$t_i-\ell_i$ members.

Suppose now that some $Q\in\mathcal H_i[U_i]$ contains
two distinct pairs $\alpha=\{x,y\}$ and $\beta$
from $\mathcal P_i$.
If $\beta$ has an endpoint $v$ outside
$\{x,y,f_i(x),f_i(y)\}$, then
$\{x,y,v\}\in\mathcal H_i[U_i]$.
Claim~\ref{cl:trace} gives $v\in L_i$,
contrary to $\beta\in\mathcal P_i$.
Otherwise, $\alpha\cup\beta$ contains either two entire
matching edges, which was just excluded, or three
vertices including one entire matching edge.
In the latter case, apply Claim~\ref{cl:trace}
to that matching edge and the third vertex.
This vertex must belong to $L_i$, again contradicting
$\alpha,\beta\in\mathcal P_i$.
Thus $\mathcal P_i$ is pseudo-private.

It remains to estimate $|\mathcal P_i|$.
Put $B_i=U_i\setminus(N_i\cup\{z_i\})$ and
$q_i'=\binom{|U_i|}{2}-e(G_i')$.
Then $q_i'\le q_i$, and $f_i(L_i)\subseteq B_i$,
so $|L_i|\le|B_i|$.
Let $g_i$ be the number of pairs
$\alpha\in\binom{U_i\setminus\{z_i\}}2$
for which both $\alpha$ and $f_i(\alpha)$
are missing from $G_i'$.
There are $q_i'-|B_i|$ missing pairs
in $U_i\setminus\{z_i\}$.
Since $f_i$ is a bijection on these pairs,
inclusion--exclusion gives
\begin{equation}\label{eq:local-J}
    |J_i|
    =\binom{|U_i|-1}{2}-2q_i'+2|B_i|+g_i.
\end{equation}

We next bound the number of pairs deleted from $J_i$.
For $v\in L_i$, let $b_i(v)=|W_i|-d_{G_i}(v,W_i)$ be the number
of its nonneighbors in $W_i$, where $d_G(v,S)=|N_G(v)\cap S|$.
Since $z_i$ has maximum degree in $G_i$,
$d_{G_i'}(v)\le|N_i|+b_i(v)$.
Separating the neighbors of $v$ into $N_i$, $B_i$ and $\{z_i\}$ gives
\[
    d_{G_i'}(v,B_i)
    \le |\{u\in N_i\setminus\{v\}:vu\notin E(G_i')\}|+b_i(v).
\]
For $v\in L_i$, define
\[
    g_i(v)
    =\left|\left\{
        u\in N_i\setminus\{v\}:
        vu\notin E(G_i'),\
        f_i(v)f_i(u)\notin E(G_i')
    \right\}\right|.
\]
Write $d_{J_i}(v)$ for the number of pairs in $J_i$
containing $v$.
Adding the number of pairs in $J_i$ joining $v$ to $N_i$
to the preceding bound, and separating the vertices
$u\in N_i\setminus\{v\}$ according to whether
$f_i(v)f_i(u)$ is an edge, gives
\begin{equation}\label{eq:local-degree}
    d_{J_i}(v)
    \le g_i(v)
       +|N_{G_i'}(f_i(v))\cap f_i(N_i)|
       +b_i(v)
    \le g_i(v)+2+b_i(v).
\end{equation}
Furthermore,
\begin{equation}\label{eq:local-g}
    \sum_{v\in L_i}g_i(v)\le g_i.
\end{equation}
Indeed, pair each missing pair counted by $g_i$
with its image under $f_i$.
Each resulting orbit has two elements,
since every pair fixed by $f_i$ is a matching edge
and hence is present.
A pair contributes at most twice to the sum on the left.
If it contributes twice, both endpoints lie in
$L_i\cap N_i$, so both endpoints of its image lie in
$B_i$ and that image contributes nothing.
Thus each orbit contributes at most two,
exactly its contribution to $g_i$.

The missing pairs between $L_i$ and $W_i$ are counted by $q_i$ but
not by $q_i'$, so
\begin{equation}\label{eq:local-delta}
    \sum_{v\in L_i}b_i(v)\le q_i-q_i'.
\end{equation}
Combining \eqref{eq:local-J}--\eqref{eq:local-delta}
and using $|L_i|\le|B_i|$, we conclude that
\begin{align*}
    |\mathcal P_i|
    \ge |J_i|-\sum_{v\in L_i}d_{J_i}(v)
    \ge \binom{|U_i|-1}{2}-2q_i'
         -\sum_{v\in L_i}b_i(v)
    \ge \binom{|U_i|-1}{2}-q_i-q_i'
    \ge \binom{|U_i|-1}{2}-2q_i,
\end{align*}
as required.
\end{poc}

We now combine the collections given by
Claim~\ref{cl:local-component}.
For each $i\in[c]$, choose such a collection $\mathcal P_i$,
and put $\mathcal P=\bigcup_{i=1}^c\mathcal P_i$.
Each $\mathcal P_i$ is also pseudo-private in $\mathcal H_i$.
Since every nonempty member of $\mathcal H$ lies in one
component, $\mathcal P$ is pseudo-private in $\mathcal H$.
Moreover, all pairs between distinct components are missing,
so $q=\sum_{i=1}^c q_i+\sum_{i<j}w_iw_j$.

Write $d=\binom{r+1}{2}-q>0$ and
$D=\binom{r+s}{2}-\binom{r+1}{2}$, and put
$M=\sum_i M_i$, $a=\sum_i a_i$, and
$X=\sum_i(w_i-1)$.
Summing $w_i=2t_i-1+M_i$ gives
$c=r-s+2+M$, and hence
$X=r+s-c=2s-2-M$.
Put $x_i=w_i-1$ and $y_i=|U_i|-1=x_i-a_i$.
Then $0\le y_i\le x_i$, $\sum_i x_i=X$ and $\sum_i y_i=X-a$.
Claim~\ref{cl:local-component} gives
\begin{equation}
    |\mathcal P|-2d\ge 2D-\sum_i\left(2\binom{x_i+1}{2}-\binom{y_i}{2}\right)\ge 2D-2\binom{X+1}{2}+\binom{X-a}{2}.
       \label{eq:local-combined}
\end{equation}
The second inequality merges the components: the difference between
$2\binom{X+1}{2}-\binom{X-a}{2}$ and the corresponding sum is
$\sum_{i<j}(2x_ix_j-y_iy_j)\ge0$.
We also have $\sum_i\binom{w_i}{2}\le\binom{X+1}{2}$.

The assumption $q<\binom{r+1}{2}$ gives
\begin{equation}\label{eq:local-bad-range}
    D<e(G(\mathcal H))
      \le\sum_i\binom{w_i}{2}
      \le\binom{X+1}{2}
      \le\binom{2s-1}{2}.
\end{equation}
This rules out $s=1$.
Since $D=(s-1)(2r+s)/2$, division by $s-1$ gives
$s>2(r+1)/3$.
Together with $s\le r-1$, this implies $r\ge6$.

We also have $3M\le X$.
Otherwise
$X=2s-2-M<3(s-1)/2$, and, since $r\ge s+1$,
\[
    \binom{X+1}{2}
    <\frac{3(s-1)(3s-1)}8
    <\frac{(s-1)(3s+2)}2\le D,
\]
contrary to \eqref{eq:local-bad-range}.
Since $a\le3M\le X$, the last term of
\eqref{eq:local-combined} is at least $\binom{X-3M}{2}$.
Consequently,
\begin{align*}
    |\mathcal P|-2d
    &\ge 2D-2\binom{X+1}{2}+\binom{X-3M}{2}\\
    &=2rs-s^2-2r+1+7M^2-(4s-7)M\\
    &\ge 2rs-\frac{11}{7}s^2-2r+2s-\frac34,
\end{align*}
where the last inequality follows by completing
the square in $M$.

Finally, $s\le r-1$, $s>2(r+1)/3$, and $r\ge6$
imply that $r-s\ge0$ and $11s-3r-14>0$.
Thus the last expression equals
\[
    \frac{3r^2}{7}
    +\frac{(r-s)(11s-3r-14)}7-\frac34
    \ge \frac{3r^2}{7}-\frac34
    \ge \frac{r(r+1)+1}6,
\]
where the last inequality holds for $r\ge6$.
We conclude that
$|\mathcal P|\ge2d+(r(r+1)+1)/6$,
which proves \eqref{eq:local-target}.
Taking $\mathcal P_A=\mathcal P$ completes the proof.
\end{proof}

\section{Concluding remarks}

Erd{\H{o}}s, Frankl and F{\"u}redi~\cite{EFF85} also proposed
the following stronger conjecture, which is not settled by our result.

\begin{conj}\label{conj:EFF-strong}
    For every $r\ge1$, we have $T(r)\ge(r+1)^2$.
\end{conj}

Together with the affine-plane construction, this would imply
that $T(r)=(r+1)^2$ whenever $r+1$ is a prime power,
establishing the optimality of that construction.

\paragraph{AI usage.}
All mathematical ideas and proofs are due to the authors. We acknowledge the use of AI tools for literature searches and language editing. During the proofreading of an earlier draft, AI also suggested a simplification of part of the calculations in \cref{lem:local}.

\bibliographystyle{abbrv}
\bibliography{cover_free_references}

\end{document}